\newcommand{\abs}[1]{{\left|#1\right|}}
\def\XXint#1#2#3{{\setbox0=\hbox{$#1{#2#3}{\int}$}
    \vcenter{\hbox{$#2#3$}}\kern-.5\wd0}}
\theoremstyle{definition}
\theoremstyle{plain}
\newtheorem{teorema}{Theorem}[section]
\newtheorem{lemma}[teorema]{Lemma}
\newtheorem{prop}[teorema]{Proposition}
\theoremstyle{definition}
\renewcommand{\div}{\text{div}}
\DeclareMathOperator{\R}{\mathbb{R}}
\DeclareMathOperator{\supp}{supp}	
\newcommand{\myfootnote}[2]{\begingroup
	\def\@makefnmark{}%
	\addtocounter{footnote}{-1}%
	\footnote{\textbf{#1} #2}%
	\endgroup}
\title{Shape optimization for a nonlinear elliptic problem related to thermal insulation}
\author{Rosa Barbato}
\date{\today}
\begin{document}
	\maketitle
	\myfootnote{}{2010 Mathematics Subject Classification: 35J66, 49Q10. }
	\myfootnote{}{Key words and phrases: $p$-Laplacian, Robin boundary conditions.}
    \vspace{-0.8cm}
	\begin{abstract}
 In this paper we consider a minimization problem of the type
 $$	I_{\beta,p}(D;\Omega)=\inf\biggl\{\int_\Omega \abs{D\phi}^pdx+\beta \int_{\partial^* \Omega}\abs{\phi}^pd\mathcal{H}^{n-1},\; \phi \in W^{1,p}(\Omega),\;\phi \geq 1 \;\textrm{in}\;D\biggl\},$$
 where $\Omega$ is a bounded connected open set in $\R^n$, $D\subset \bar{\Omega}$ is a compact set and $\beta$ is a positive constant. 
 \\ We let the set $D$ vary under prescribed geometrical constraints and $\Omega \setminus D$ of fixed thickness, in order to look for the best (or worst) geometry in terms of  minimization (or maximization) of $I_{\beta,p}$. In the planar case, we show that under perimeter constraint the disk maximize $I_{\beta,p}$.\\ In the $n$-dimensional case we restrict our analysis to convex sets showing that the same is true for the ball but under different geometrical constraints.
	
	\end{abstract}
\section{Introduction}
Insulation problems have interested many researchers and it's a very active field of research as it is related to environmental improvement. In fact, thermal insulation may be applied to save energy and even if it may seem counterintuitive, having too much insulation can have a negative impact on the energy efficiency; furthermore, adding too much insulation is expensive and unsustainable.\\
In this paper we consider a problem of this type: let $\Omega$ be a bounded connected open set of $\R^n$,  $D\subset \overline{\Omega}$ a compact set, $\beta>0$ a fixed constant.
\begin{equation}
	\label{prob}
	I_{\beta,p}(D;\Omega)=\inf\biggl\{\int_\Omega \abs{D\phi}^pdx+\beta \int_{\partial \Omega}\abs{\phi}^pd\mathcal{H}^{n-1},\; \phi \in W^{1,p}(\Omega),\;\phi \geq 1 \;\textrm{in}\;D\biggl\}.
\end{equation}
  Our aim is to study maximization and minimization problems of $I_{\beta,p}(D,\Omega)$, among domains with given geometrical constraints. 
 If $\Omega$ has Lipschitz boundary, then there exists a minimizer $u$ of (\ref{prob}) that satisfies 
	$$
	\begin{cases}
		\Delta_p u=0\qquad &\textrm{in}\; \Omega \setminus \overline{D}\\
		u=1 \qquad & \textrm{in}\; D\\
		{\abs{Du}}^{p-2}\dfrac{\partial u}{\partial \nu}+\beta {\abs{u}}^{p-2}u=0 \qquad & \textrm{on}\; \partial \Omega
	\end{cases}
	$$  

where $$\Delta_p u= \div(\abs{Du}^{p-2}Du)$$ is the $p$-Laplace operator, and the functional $I_{\beta, p}(\Omega, D)$ assumes the following form $$I_{\beta,p}(\Omega, D)=\beta\int_{\partial\Omega}{\abs{u}}^{p-2}u\; d\mathcal{H}^{n-1}.$$
In this order of ideas, the case $p=2$ has been treated in \cite{prof}. In this case the minimization problem arises from a thermal insulation problem. Related results are also contained in \cite{butt, caff, bucur, nahon, simm, nitsch, prof, window, esposito, frie}.\\ In the present paper we consider the general case $1<p< +\infty$.\\ The plan of the paper is the following: after recalling some well-known fact on convex domains in section \ref{sec2}, we prove some basic properties of $I_{\beta,p}(\Omega,D)$ in Section \ref{sec3}.\\
In section \ref{sez4}-\ref{sec6} we discuss our main results. First, we consider the domains $\Omega=D+\delta B$, with $\delta>0$; in the planar case studied in section \ref{sez4}, we found that if $D$ is an open, bounded, connected set of $\R^n$ with piecewise $C^1$ boundary, the maximum of $I_{\beta,p}(D, D+\delta B)$ is achieved at the disk having the same perimeter of $D$. In section \ref{sez5} we consider the $n$-dimensional case for convex sets and we obtain that among the convex sets $\Omega=D+\delta B$, with fixed $\delta$ and of given $W_{n-1}$ quermassintegral of $D$, the maximum is attained when $D$ is a ball.\\
Finally in section \ref{sec6}, we show a counterintuitive behaviour of the functional $I_{\beta,p}(D, \Omega)$; indeed we prove that for suitable values of $\beta$, there exists a positive constant $\delta_0$ such that for any bounded domain $\Omega$, with $D\subset \Omega$ and $\abs{\Omega}-\abs{D}<\delta_0$, then $I_{\beta,p}(B_R,\Omega)>I_{\beta,p}(B_R,B_R)$.
\section{Preliminaries}
\label{sec2}
Here we list some basic facts on convex sets (see, for example \cite{burago, convex}). Let $K$ be a nonempty, bounded, convex set in $\R^n$ and let $\delta>0$. Then the Steiner formulas for the volume and the perimeter read as

	$$
	\begin{aligned}
		\abs{K+\delta B}&=\sum_{j=0}^{n}\binom{n}{j} W_j(K)\delta^j\\
		&=\abs{K}+nW_1(K)\delta+\dfrac{n(n-1)}{2}W_2(K)\delta^2+\cdots+\omega_n \delta^n
	\end{aligned}
$$
and

\begin{equation}
	\label{steinerconvex}
	\begin{aligned}
		P(K+\delta B)&=n \sum_{j=0}^{n-1}\binom{n-1}{j}W_{j+1}(K)\delta^j\\[5pt]
		&=P(K)+n(n-1)W_2(K)\delta +\cdots +n\omega_n \delta^{n-1},
	\end{aligned}
\end{equation}

 where $B$ is the unit ball in $\R^n$ centered at the origin, whose measure is denoted by $\omega_n$  and $K+\delta B$ stands for the Minkowski sum.\\The coefficients $W_j(K)$ are the so-called quermassintegrals of $K$. In particular $W_0$ is the volume of $K$, $W_1=\frac{P}{n}$ and $W_n=\omega_n$.\\
 It immediately follows that 
 \begin{equation}
 	\label{12}
 	\lim\limits_{\delta \rightarrow 0^+}\dfrac{P(K+\delta B)-P(K)}{\delta}=n(n-1)W_2(K).
 \end{equation}

 If $K$ has $C^2$ boundary, with nonzero Gaussian curvature, the quermassintegrals are related to the principal curvatures of $\partial K$. Indeed, in such a case
 \begin{equation}
 	\label{13}
 	W_i(K)=\frac{1}{n}\int_{\partial K} H_{i-1}(x)d\mathcal{H}^{n-1}, \qquad i=1,\cdots ,n.
 \end{equation} 
 Here $H_j$ denotes the $j$-th normalized elementary symmetric function of the principal curvatures of $\partial K$, that is $H_0=1$, and 
 $$H_j(x)={\binom{n-1}{j} }^{-1}\sum_{1\leq i_1 \leq \cdots \leq i_j\leq n-1}{k_{i_1}(x)\cdots k_{i_j}(x)}, \qquad j=1, \cdots, n-1,$$ 
 where $k_{1}(x)\cdots k_{n-1}(x)$ are the principal curvatures at a point $x \in \partial K$. In particular, by (\ref{12}) and (\ref{13}) we get also that 
 $$\lim\limits_{\delta \rightarrow 0^+}\dfrac{P(K+\delta B)-P(K)}{\delta}=(n-1)\int_{\partial K}H_1(x)d\mathcal{H}^{n-1},$$ where $H_1(x)$ is the mean curvature of $\partial K$ at a point $x$.\\ The Alexandrov-Fenchel inequalities state that
 \begin{equation}
 	\label{14}
 	\biggl(\dfrac{W_j(K)}{\omega_n}\biggl)^{\frac{1}{n-j}}\geq \biggl(\dfrac{W_i(K)}{\omega_n}\biggl)^{\frac{1}{n-i}}, \qquad0\leq i < j \leq n-1,
 \end{equation} 

 where the inequality is replaced by an equality if and only if $K$ is a ball. \\ In what follows, the Alexandrov-Fenchel inequalities will be used for particular values of $i$ and $j$. When $i=0$ and $j=1$, we have the classical isoperimetric inequality: 
 $$P(K)\geq n {\omega_n}^{\frac{1}{n}}{\abs{K}}^{1-\frac{1}{n}}.$$
 Moreover, if $i=k-1,$ and $j=k$, we have 
 $$W_k(K)\geq {\omega_n}^{\frac{1}{n-k+1}} W_{k-1}(K)^{\frac{n-k}{n-k+1}}.$$
 Let us denote by $K^*$ a ball such that $W_{n-1}(K)=W_{n-1}(K^*).$ Then by the Alexandrov-Fenchel inequalities (\ref{14}), for $0 \leq i < n-1$ it holds that
 $$\biggl(\dfrac{W_i(K^*)}{\omega_n}\biggl)^{\frac{1}{n-i}}=\dfrac{W_{n-1}(K^*)}{\omega_n}=\dfrac{W_{n-1}(K)}{\omega_n}\geq \biggl(\dfrac{W_i(K)}{\omega_n}\biggl)^{\frac{1}{n-i}}, $$
 hence
\begin{equation}
	\label{quer}
	 W_i(K)\leq W_i(K^*), \qquad 0\leq i\leq n-1.
\end{equation}
 
 Now consider the the two dimensional case. For an open, bounded, connected set $D \subset \R^2$ we denote by $D^*$ the disk having the same perimeter of $D$. If $\Omega=D+\delta B$ and $\Omega_*=D^*+\delta_* B$, where $B$ is the disk centered at the origin, the Steiner formulae become $$\abs{\Omega}=\abs{D}+P(D)\delta + \pi \delta ^2, \qquad P(\Omega)=P(D)+2\pi \delta$$
 $$\abs{\Omega_*}=\abs{D^*}+P(D^*)\delta_* + \pi \delta_* ^2, \qquad P(\Omega_*)=P(D^*)+2\pi \delta_*.$$
 Let us observe that, in our context, if we ask that the area of the insulating material $\Omega \setminus \overline{D}$ remains constant, then 
 $$\abs{\Omega}-\abs{D}=P(D)\delta +\pi \delta ^2=\abs{\Omega_*}-\abs{D^*}=P(D^*)\delta_*+\pi \delta_*^2.$$
 Since $P(D)=P(D^*)$, then $\delta=\delta_*$ and, as byproduct, $P(\Omega)=P(\Omega_*)$. On the contrary, if $\delta=\delta_*$, then $\abs{\Omega}-\abs{D}=\abs{\Omega_*}-\abs{D^*}$. \\
 For a general bounded domain with piecewise $C^1$ boundary, it holds that
 \begin{equation}
 	\label{steiner}
 	\abs{\Omega}\leq\abs{D}+P(D)\delta + \pi \delta ^2, \qquad P(\Omega) \leq P(D)+2\pi \delta,
 \end{equation}
 hence $\delta = \delta_*$ implies 
 $$\abs{\Omega}-\abs{D}\leq \abs{\Omega_*}-\abs{D^*},$$
 which means that the area of the insulating material increases keeping fixed the perimeter $P(D)$ and the thickness $\delta$.
 
 \section{The variational problem}
 \label{sec3}
Given a compact set $D$ in $\R^n$ and a bounded connected open set $\Omega$ of $\R^n$ with $D \subset \bar{\Omega}$, we are interested to study 
\begin{equation}
	\label{funzionale}
	I_{\beta,p}(\Omega, D)=\inf\left\{\int_\Omega \abs{D\phi}^{p}dx+\beta\int_{\partial^{*}\Omega}\abs{\phi}^{p}d\mathcal{H}^{n-1},\; \phi\in W^{1,p}(\Omega),\; \phi \geq 1 \;\text{in}\; \overline{D}\right\}, 
\end{equation}
with $\beta>0$. The following result holds.
\begin{prop}
	If $\Omega$ is bounded connected open set of $\R^n$ with Lipschitz boundary, and $D$ is a compact set with $D\subset \Omega$, then there exists a unique positive minimizer $u \in W^{1,p}(\Omega)$ of (\ref{funzionale}) such that satisfies
		
\begin{equation}
	\label{theproblem}
		\begin{cases}
		\Delta_p u=0\qquad &\textrm{in}\; \Omega \setminus \overline{D}\\
		u=1 \qquad & \textrm{in}\; D\\
		{\abs{Du}}^{p-2}\dfrac{\partial u}{\partial \nu}+\beta {\abs{u}}^{p-2}u=0 \qquad & \textrm{on}\; \partial \Omega
	\end{cases}
\end{equation}
	
Moreover the functional can be written as 
	\begin{equation}
		\label{8bis}
		I_{\beta,p}(\Omega, D)=\beta\int_{\partial\Omega}{u}^{p-1}\; d\mathcal{H}^{n-1},
\end{equation}
where $u$ is the solution to (\ref{theproblem}).
\end{prop}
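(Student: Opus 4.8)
The plan is to establish existence and uniqueness of a minimizer by the direct method of the calculus of variations, and then derive the Euler--Lagrange system and the boundary representation formula.

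\textbf{Existence.} First I would fix a minimizing sequence $\phi_k \in W^{1,p}(\Omega)$ with $\phi_k \geq 1$ on $\overline{D}$ and $J(\phi_k) := \int_\Omega |D\phi_k|^p\,dx + \beta \int_{\partial\Omega}|\phi_k|^p\,d\mathcal{H}^{n-1} \to I_{\beta,p}(\Omega,D)$. Since $0 \leq J(\phi_k) \leq C$, the gradient norms $\|D\phi_k\|_{L^p(\Omega)}$ and the boundary traces $\|\phi_k\|_{L^p(\partial\Omega)}$ are bounded; because $\Omega$ has Lipschitz boundary, a Poincaré-type inequality controlling $\|\phi\|_{L^p(\Omega)}$ by $\|D\phi\|_{L^p(\Omega)} + \|\phi\|_{L^p(\partial\Omega)}$ then gives a uniform $W^{1,p}(\Omega)$ bound. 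By reflexivity of $W^{1,p}$ we extract a weakly convergent subsequence $\phi_k \rightharpoonup u$; compactness of the trace operator $W^{1,p}(\Omega) \to L^p(\partial\Omega)$ (again using the Lipschitz boundary) and of the embedding into $L^p(\Omega)$ lets us pass the constraint $\phi_k \geq 1$ on $\overline{D}$ to the limit $u \geq 1$ on $\overline{D}$. Weak lower semicontinuity of $\phi \mapsto \int_\Omega |D\phi|^p$ (convexity of $\xi \mapsto |\xi|^p$) together with continuity of the boundary term under the strong $L^p(\partial\Omega)$ convergence yields $J(u) \leq \liminf_k J(\phi_k) = I_{\beta,p}(\Omega,D)$, so $u$ is a minimizer. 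Replacing $u$ by $|u|$ does not increase $J$ and preserves the constraint, so we may take $u \geq 0$; the strong maximum principle for the $p$-Laplacian then upgrades this to $u > 0$ in $\Omega \setminus \overline{D}$.

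\textbf{Uniqueness.} The functional $J$ is convex on the convex admissible set $\{\phi \in W^{1,p}(\Omega): \phi \geq 1 \text{ on } \overline{D}\}$, and in fact strictly convex in the relevant sense: $\xi \mapsto |\xi|^p$ is strictly convex for $1<p<\infty$, so if $u_1, u_2$ are two minimizers then $Du_1 = Du_2$ a.e., whence $u_1 - u_2$ is constant on the connected set $\Omega$; since both equal $1$ on $\overline{D} \subset \Omega$, the constant is zero. (If one prefers, strict convexity of $t \mapsto |t|^p$ applied to the boundary term handles the degenerate case where the gradient argument alone is insufficient.)

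\textbf{Euler--Lagrange system and representation formula.} For the variational characterization, take any $\psi \in W^{1,p}(\Omega)$ with $\psi = 0$ on $\overline{D}$; then $u + t\psi$ is admissible for all $t \in \R$, and differentiating $t \mapsto J(u+t\psi)$ at $t=0$ gives the weak formulation
\begin{equation}
\int_\Omega |Du|^{p-2} Du \cdot D\psi\,dx + \beta \int_{\partial\Omega} |u|^{p-2} u\, \psi\,d\mathcal{H}^{n-1} = 0
\end{equation}
for all such $\psi$. This is precisely the weak form of $\Delta_p u = 0$ in $\Omega \setminus \overline{D}$ together with the Robin condition $|Du|^{p-2}\partial_\nu u + \beta |u|^{p-2}u = 0$ on $\partial\Omega$; standard regularity theory for the $p$-Laplacian ($C^{1,\alpha}_{\loc}$ interior regularity) makes the classical reading of the system legitimate in $\Omega \setminus \overline{D}$, while $u = 1$ on $D$ comes from the constraint being active (by the minimality one checks $u \equiv 1$ on $D$, not merely $u \geq 1$, since lowering $u$ toward $1$ on $D$ strictly decreases the Dirichlet energy unless $u$ is already $p$-harmonic there with that boundary data — more simply, $u=1$ in $D$ is part of the ansatz as the extension minimizing energy). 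Finally, to obtain (\ref{8bis}), test the weak formulation with $\psi = u - 1$ on $\Omega \setminus \overline{D}$ extended by $0$ on $D$ (note $u - 1 = 0$ on $\partial D$, so this is an admissible test function); this yields $\int_{\Omega\setminus\overline{D}} |Du|^p\,dx = -\beta \int_{\partial\Omega}|u|^{p-2}u(u-1)\,d\mathcal{H}^{n-1}$, and since $u$ vanishes nowhere on $\partial\Omega$ is not needed — expanding gives $\int_\Omega |Du|^p = -\beta\int_{\partial\Omega}|u|^p + \beta\int_{\partial\Omega}|u|^{p-2}u$, so $J(u) = \int_\Omega|Du|^p + \beta\int_{\partial\Omega}|u|^p = \beta\int_{\partial\Omega}|u|^{p-2}u\,d\mathcal{H}^{n-1} = \beta\int_{\partial\Omega}u^{p-1}\,d\mathcal{H}^{n-1}$, using $u \geq 0$.

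\textbf{Main obstacle.} The delicate points are the trace-compactness and Poincaré estimates needed for coercivity on the \emph{full} $W^{1,p}(\Omega)$ norm (the functional controls only the gradient and the boundary trace, not $\|u\|_{L^p(\Omega)}$ directly), which is where the Lipschitz regularity of $\partial\Omega$ is essential; and, on the PDE side, justifying the integration-by-parts leading to (\ref{8bis}) rigorously at the Robin boundary, which requires knowing $|Du|^{p-2}\partial_\nu u$ makes sense as a distribution on $\partial\Omega$ — this is handled by interpreting all boundary identities in the weak (tested) sense rather than pointwise.
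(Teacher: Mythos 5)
Your proposal is correct and follows essentially the same route as the paper's own (sketched) proof: direct method with a minimizing sequence, weak lower semicontinuity, passage to $|u|$ for nonnegativity, uniqueness by (strict) convexity on the convex admissible class, and the formula (\ref{8bis}) obtained by testing the weak Euler--Lagrange identity with $u-1$, which vanishes on $\overline{D}$; your added details (the Poincar\'e/trace inequality for coercivity, trace compactness for the boundary term) simply flesh out what the paper leaves implicit. The one step worth stating more cleanly is $u\equiv 1$ in $D$: rather than the vague ``ansatz'' remark, note that replacing $u$ by $\min(u,1)$ preserves the constraint and does not increase either term of the functional, so the (unique, nonnegative) minimizer satisfies $u\le 1$ and hence $u=1$ on $\overline{D}$.
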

\begin{proof}
	The proof follows by a standard application of a calculus of variation's argument. We sketch the proof for completeness.\\
	Let $u_n$ be a minimizing sequence for $I_{\beta,p}(\Omega, D)$. Then $u_n$ is bounded in $W^{1,p}$, $\exists u_{n_k} \rightarrow u$ weakly in $W^{1,p}(\Omega)$, strongly in $L^p$ and a.e in $\Omega$. Then by semicontinuity, $u$ is a minimizer of $I_{\beta,p}(\Omega, D)$. Moreover, being $\abs{u}$ still a minimizer, we can assume $u \geq 0$. By Harnack, $u>0$ in $\Omega$. Furthermore, by convexity, $u$ is the unique minimizer and it satisfies 
	$$
	\begin{cases}
		\Delta_p u=0\qquad &\textrm{in}\; \Omega\\
		u=1 \qquad & \textrm{in}\; \bar{D}\\
		{\abs{Du}}^{p-2}\dfrac{\partial u}{\partial \nu}+\beta {u}^{p-1}=0 \qquad & \textrm{on}\; \partial \Omega
	\end{cases}
	$$  
If we denote with $W^{1,p}_{\overline{D}}(\Omega)$ the closure in $W^{1,p}(\Omega)$ of $\{\phi|_{\Omega}: \phi \in C^{\infty}(\R^n) \;\textrm{with}\; \bar{D} \cap \supp\phi=\emptyset\}$, this is true, by definition, if and only if 
	$$\int_{\Omega\setminus \overline{D}}\abs{Du}^{p-2}Du D\phi\;dx+\beta\int_{\partial \Omega}u^{p-1}\phi\;d\mathcal{H}^{n-1}=0$$
	for any $\phi \in W^{1,p}_{\overline{D}}(\Omega)$.\\
	So, if we take $\phi=u-1\in W^{1,p}_{\overline{D}}(\Omega)$,
	$$\int_\Omega {\abs{Du}}^{p-2}Du(Du)\;dx+\beta \int_{\partial\Omega}u^{p-1}u(u-1)\;d\mathcal{H}^{n-1}=0$$
	that is 
	$$\int_\Omega (Du)^{p}\;dx+\beta \int_{\partial\Omega}({u}^{p}-{u}^{p-1})\;d\mathcal{H}^{n-1}=0$$
	
	and this implies (\ref{8bis}).
\end{proof}
\section{The planar case}
\label{sez4}
In this section we consider the case where $\Omega$ is the Minkowski sum $$\Omega=D+\delta B,$$ $B$ is the unit ball centered at the origin and $\delta$ is a positive constant, representing the thickness of $\Omega\setminus D$. Then we set $$I_{\beta,p,\delta}(D)=I_{\beta,p}(D,D+\delta B).$$
\begin{teorema}
	Let $D$ be an open, bounded connected set of $\R^{2}$ with piecewise $C^{1}$ boundary. Then $$I_{\beta,p,\delta}(D)\leq I_{\beta,p,\delta}(D^*),$$
	where $D^*$ is the disk having the same perimeter of $D$.
\end{teorema}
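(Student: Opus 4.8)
The plan is to transplant the radial minimizer of the problem on the disk $D^*$ onto $\Omega = D + \delta B$ through the distance function, and to estimate the energy of the resulting competitor by means of the planar Steiner inequalities $(\ref{steiner})$. Write $D^* = B_R$, so that $\Omega_* := D^* + \delta B = B_{R+\delta}$ and, since $P(D^*) = P(D)$, one has $2\pi R = P(D)$. By the Proposition of Section \ref{sec3} the minimizer $u^*$ of $I_{\beta,p}(\Omega_*, D^*)$ exists and is unique, and since both $D^*$ and $\Omega_*$ are invariant under rotations about the origin, $u^*$ must be radial: $u^*(x) = \psi(|x|)$, with $\psi \equiv 1$ on $[0,R]$ and $\psi$ the (bounded, $C^1$) solution on $[R, R+\delta]$ of the radial form of $(\ref{theproblem})$. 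Writing the energy of $u^*$ in polar coordinates and using $2\pi R = P(D^*)$ I would record that
$$
I_{\beta,p,\delta}(D^*) = 2\pi\int_R^{R+\delta}|\psi'(r)|^p\, r\, dr + 2\pi\beta\,|\psi(R+\delta)|^p(R+\delta) = \int_0^\delta |\psi'(R+t)|^p\bigl(P(D^*)+2\pi t\bigr)\,dt + \beta\,|\psi(R+\delta)|^p\bigl(P(D^*)+2\pi\delta\bigr).
$$

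Next I would build a competitor for $I_{\beta,p,\delta}(D)$ out of the profile $\psi$. Let $d(x) = \textrm{dist}(x,D)$; then $0 \le d \le \delta$ on $\Omega = D+\delta B$, the function $d$ is Lipschitz with $|\nabla d| = 1$ a.e.\ on $\Omega\setminus\overline D$, and for $t \in (0,\delta)$ the level set $\{d = t\}$ is the boundary of the tube $D + tB$. Set $v(x) := \psi(R + d(x))$. Then $v \in W^{1,p}(\Omega)$, and since $d = 0$ on $\overline D$ and $\psi(R) = 1$ we get $v = 1$ on $\overline D$, so $v$ is admissible. By the chain rule $|\nabla v| = |\psi'(R + d(x))|$ a.e., and the coarea formula gives
$$
\int_\Omega |\nabla v|^p\, dx = \int_0^\delta |\psi'(R+t)|^p\, \mathcal{H}^1\bigl(\{d = t\}\bigr)\, dt ,
$$
while $v$ equals the constant $\psi(R+\delta)$ on $\partial\Omega \subset \{d = \delta\}$, so the boundary term is $\beta\,|\psi(R+\delta)|^p\, P(\Omega)$.

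To conclude I would invoke $(\ref{steiner})$, applied once with thickness $t$ and once with thickness $\delta$: these give $\mathcal{H}^1(\{d=t\}) = P(D+tB) \le P(D) + 2\pi t$ and $P(\Omega) \le P(D) + 2\pi\delta$. Since $|\psi'|^p$, $|\psi|^p$ and $\beta$ are nonnegative, substituting these bounds into the two identities above and using $P(D) = P(D^*)$ yields
$$
I_{\beta,p,\delta}(D) \le \int_\Omega |\nabla v|^p\, dx + \beta\int_{\partial\Omega}|v|^p\, d\mathcal{H}^1 \le \int_0^\delta |\psi'(R+t)|^p\bigl(P(D^*)+2\pi t\bigr)\,dt + \beta\,|\psi(R+\delta)|^p\bigl(P(D^*)+2\pi\delta\bigr) = I_{\beta,p,\delta}(D^*),
$$
which is the claim.

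The step I expect to demand the most care is the rigorous handling of the distance function: checking that $v \in W^{1,p}(\Omega)$ with $|\nabla v| = |\psi' \circ (R+d)|$ a.e., that the coarea formula applies in the stated form, and that $\mathcal{H}^1(\{d = t\})$ genuinely coincides with $P(D+tB)$ for the relevant values of $t$, so that $(\ref{steiner})$ may be used; one should also confirm the regularity of the radial profile $\psi$, in particular that $\psi'$ stays bounded up to the endpoints, which holds because $R>0$.
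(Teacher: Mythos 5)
Your proposal is correct and follows essentially the same route as the paper: transplanting the radial minimizer of the disk problem onto $\Omega=D+\delta B$ via the distance function (your $v(x)=\psi(R+d(x))$ is exactly the paper's $w=G(R+d(x))$, since $G$ coincides with the radial profile), then estimating the Dirichlet term by the coarea formula together with the planar Steiner inequality $P(D+tB)\leq P(D)+2\pi t$ and the boundary term by $P(\Omega)\leq P(D)+2\pi\delta=P(\Omega_*)$. The only cosmetic difference is that you integrate over the distance parameter $t\in(0,\delta)$ while the paper integrates over the level values of the transplanted function; the two are equivalent by a change of variables.
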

\begin{proof}
	Let $v$ be the radial minimizer of $I_{\beta, p,\delta}(D^*)$ and $\Omega_*=D^*+\delta B$. Given $R$, the radius of $D^*$, then denote by $$v_m=v(R+\delta)=\min_{\Omega_*}v$$ and
	$$\max_{\Omega_*}v=v(R)=1.$$ Being $v$ radial, the modulus of the gradient of $v$ is constant on the level lines of $v$.\\ Let us consider the function $$g(t)=\abs{Dv}_{v=t},\qquad v_m<t\leq1$$
	and
	$$w(x)=G(R+d(x)),\;\; x\in\Omega,\qquad \textrm{where}\;\; G^{-1}(t)=R+\int_{t}^{1}\dfrac{1}{g(s)}ds$$
	and $d(x)$ is the distance of a point $x$ from $D$. In particular, $v$ is decreasing and the function $g$ can be zero only for $v=1$. This means that $G^{-1}$ is decreasing, therefore so is $G$ and then $w\in W^{1,p}(\Omega)$. It results:
		$$
		\begin{aligned}
			& \max_{\Omega}w=w\rvert_{\partial D}=1=G(R),\\
			& w_m=\min_{\Omega}w=w\rvert_{\partial\Omega}=G(R+\delta)=v_m,\\
			&\abs{Dw}_{w=t}=\abs{Dv}_{v=t}=g(t),\;\;w_m\leq t\leq1.
		\end{aligned}
	$$

Hence $w$ is a test function. Then $$I_{\beta,p,\delta}(D)\leq\int_{\Omega\setminus D}{\abs{Dw}}^p \;dx+\beta \int_{\partial\Omega}{\abs{w}}^p\;d\mathcal{H}^{1}.$$
Let $$E_t=\{x\in \Omega:w(x)>t\}=\{x\in \Omega:d(x)<G^{-1}(t)\}=D+G^{-1}(t)B$$
and let
$$B_t=\{x\in\Omega_*:v(x)>t\}.$$
By Steiner formula (\ref{steiner}) we get
$$P(\Omega)\leq P(D)+2\pi\delta,$$
so
$$P(E_t)\leq P(D)+2\pi G^{-1}(t)=P(D^*)+2\pi G^{-1}(t)=2\pi (R+G^{-1}(t))=P(B_t)$$
for every $t \in\; ]w_m,1]$.
Hence,
\begin{center}
	
	$$\int_{w=t} \abs{Dw}d\mathcal{H}^1=\int_{w=t}g(t)d\mathcal{H}^1
	=g(t)P(E_t)\leq g(t)P(B_t)=\int_{v=t}\abs{Dv}d\mathcal{H}^1,\qquad w_m<t\leq1.$$
	
\end{center}
Then, by co-area formula

\begin{equation}
	\label{eq1}
	\begin{aligned}
		\int_{\Omega\setminus \overline{D}}{\abs{Dw}}^{p}dx&=\int_{w_m}^{1}dt\int_{w=t}{\abs{Dw}}^{p-1}d\mathcal{H}^1\\
		&=\int_{w_m}^{1}[g(t)]^{p-1}P(E_t)dt\leq\int_{w_m}^{1}[g(t)]^{p-1}P(B_t)dt\\
		&=\int_{\Omega\setminus \overline{D}_*}{\abs{Dv}}^p dx.
	\end{aligned}
\end{equation}
Since by construction $w=w_m=v_m$ on $\partial \Omega$ and $P(\Omega)=P(\Omega_*)$, we have
\begin{equation}
	\label{eq2}
	\int_{\partial\Omega} {\abs{w}}^p d\mathcal{H}^1={\abs{w_m}}^{p}P(\Omega)={\abs{v_m}}^pP(\Omega_*)=\int_{\partial \Omega_*}{\abs{v}}^p d\mathcal{H}^1.
\end{equation}

Hence, by (\ref{eq1}) and (\ref{eq2}) it holds that
\begin{center}
	$$
	\begin{aligned}
		I_{\beta,p, \delta}(D)\leq\int_{\Omega\setminus D}{\abs{Dw}}^pdx+\beta\int_{\partial \Omega} {\abs{w}}^pd\mathcal{H}^1
		&\leq \int_{\Omega_*\setminus D^*}{\abs{Dv}}^pdx+\beta\int_{\partial \Omega_*}{\abs{v}}^pd\mathcal{H}^1=I_{\beta,p, \delta}(D^*).
	\end{aligned}
	$$
\end{center} 

\end{proof}
 \section{The $n$-dimensional case}
 \label{sez5}
 Now we prove that in higher dimension ($n \geq 3$) balls still maximize $I_{\beta, p, \delta}$, but our result finds its natural generalization in the class of convex domains $D$.
 \begin{teorema}
 	Let $D$ be an open, bounded, convex set of $\R^n$. Then $$I_{\beta,p,\delta}(D)\leq I_{\beta,p,\delta}(D^*),$$
 	where $D^*$ is the ball having the same $W_{n-1}$ quermassintegral of $D$, that is $W_{n-1}(D)=W_{n-1}(D^*)$.
 \end{teorema}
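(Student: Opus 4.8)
The plan is to repeat, almost word for word, the proof of the planar case, with the two–dimensional Steiner formula and isoperimetric comparison replaced by the Steiner formula (\ref{steinerconvex}) for convex bodies together with the quermassintegral inequality (\ref{quer}). First I would let $v$ be the minimizer of $I_{\beta,p,\delta}(D^*)$ on $\Omega_*=D^*+\delta B$; since $D^*$ is a ball, $\Omega_*$ is a concentric ball, and by uniqueness of the minimizer (recalled in Section~\ref{sec3}) together with rotational symmetry, $v$ is radial. Writing $R$ for the radius of $D^*$, so that $\max_{\Omega_*}v=v(R)=1$ and $v_m:=\min_{\Omega_*}v=v(R+\delta)$, set $g(t)=\abs{Dv}_{v=t}$ for $v_m<t\le 1$ and define on $\Omega=D+\delta B$ the function $w(x)=G(R+d(x))$ exactly as in the planar proof, where $d(x)$ is the distance of $x$ from $D$ and $G^{-1}(t)=R+\int_t^1 g(s)^{-1}\,ds$. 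The very same verification as in Section~\ref{sez4} gives $w\in W^{1,p}(\Omega)$, $w\equiv 1$ on $D$, $w\equiv v_m$ on $\partial\Omega$, and $\abs{Dw}_{w=t}=g(t)$; note that $\Omega=D+\delta B$ is convex, hence has Lipschitz boundary, so $w$ is admissible for $I_{\beta,p,\delta}(D)$.

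The geometric heart of the argument is the comparison of super-level sets. For $t\in\,]v_m,1]$ put $\rho(t)=G^{-1}(t)-R=\int_t^1 g(s)^{-1}\,ds\in[0,\delta]$; since $D$ is convex, the super-level set of $w$ is $E_t=\{x\in\Omega:w(x)>t\}=\{d(x)<\rho(t)\}=D+\rho(t)B$, while the corresponding super-level set of $v$ is the ball $B_t=\{x\in\Omega_*:v(x)>t\}=D^*+\rho(t)B$ (with the same radius increment $\rho(t)$, by the change of variables $s=v(r)$ in the integral defining $\rho$). Applying the Steiner formula (\ref{steinerconvex}) to $D$ and to $D^*$ gives
$$P(E_t)=n\sum_{j=0}^{n-1}\binom{n-1}{j}W_{j+1}(D)\,\rho(t)^j,\qquad P(B_t)=n\sum_{j=0}^{n-1}\binom{n-1}{j}W_{j+1}(D^*)\,\rho(t)^j.$$
Since $W_n(D)=\omega_n=W_n(D^*)$ and, by (\ref{quer}), $W_i(D)\le W_i(D^*)$ for $1\le i\le n-1$, a term-by-term comparison yields $P(E_t)\le P(B_t)$ for every $t\in\,]v_m,1]$; the choice $\rho(t)=\delta$ in particular gives $P(\Omega)\le P(\Omega_*)$.

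From here the conclusion follows verbatim as in the planar case. On each level set $\int_{w=t}\abs{Dw}^{p-1}\,d\mathcal{H}^{n-1}=g(t)^{p-1}P(E_t)\le g(t)^{p-1}P(B_t)=\int_{v=t}\abs{Dv}^{p-1}\,d\mathcal{H}^{n-1}$, so the co-area formula gives $\int_{\Omega\setminus\overline{D}}\abs{Dw}^p\,dx\le\int_{\Omega_*\setminus\overline{D^*}}\abs{Dv}^p\,dx$; and since $w\equiv v_m$ on $\partial\Omega$, $\int_{\partial\Omega}\abs{w}^p\,d\mathcal{H}^{n-1}=\abs{v_m}^pP(\Omega)\le\abs{v_m}^pP(\Omega_*)=\int_{\partial\Omega_*}\abs{v}^p\,d\mathcal{H}^{n-1}$. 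Adding the two estimates and recalling that $w$ is a test function for $I_{\beta,p,\delta}(D)$ while $v$ realizes $I_{\beta,p,\delta}(D^*)$ gives $I_{\beta,p,\delta}(D)\le I_{\beta,p,\delta}(D^*)$.

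The only genuinely new point — and hence the step to get right — is the term-by-term Steiner comparison of paragraph two: one must check that the relevant super-level sets really are the Minkowski sums $D+\rho(t)B$ and $D^*+\rho(t)B$ (this is where convexity of $D$ enters, so that $\{d<\rho\}=D+\rho B$ and these sets carry the regularity needed for (\ref{steinerconvex}) and the co-area formula), and that the quermassintegral inequality (\ref{quer}) is invoked in the correct direction, namely for the ball $D^*$ normalized by $W_{n-1}(D^*)=W_{n-1}(D)$. Everything else is a direct transcription of the two-dimensional proof.
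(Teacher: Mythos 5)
Your proposal is correct and follows essentially the same route as the paper: the radial minimizer on the ball configuration, the rearranged test function $w=G(R+d(x))$, the level-set perimeter comparison via the Steiner formula (\ref{steinerconvex}) together with the quermassintegral inequality (\ref{quer}), and the co-area plus boundary-term estimates. Your explicit use of the radius increment $\rho(t)=G^{-1}(t)-R$ and the inequality (rather than equality) $P(\Omega)\leq P(\Omega_*)$ in the boundary term in fact states these steps slightly more carefully than the paper's own write-up.
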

\begin{proof}
	Let be $\Omega_*=D^*+\delta B$, and $v$ the radial minimizer of $I_{\beta,p,\delta}(D^*)$. Since $\Omega=D+\delta B$, Steiner formula for the perimeter (\ref{steinerconvex}) and Aleksandrov-Fenchel inequalities (\ref{quer}) imply $P(\Omega)\leq P(\Omega^*)$.\\
	We denote by $v_m=v(R+\delta)=\min_{\Omega^*}v$ and by $\max_{\Omega_*}v=v(R)=1$. Being $v$ radial, the modulus of the gradient of $v$ is constant on the level lines of $v$.\\ Let us consider the function $$g(t)=\abs{Dv}_{v=t},\qquad v_m<t\leq1$$
	and
	$$w(x)=G(R+d(x)),\;\; x\in\Omega,\qquad \textrm{where}\;\; G^{-1}(t)=R+\int_{t}^{1}\dfrac{1}{g(s)}ds,$$
	and $d(x)$ is the distance of a point $x$ from $D$.\\ By construction $w\in 	W^{1,p}(\Omega)$ and being $G$ decreasing it results:
	\begin{center}
		$$
		\begin{aligned}
			& \max_{\Omega}w=w\rvert_{\partial D}=1=G(R),\\
			& w_m=\min_{\Omega}w=w\rvert_{\partial\Omega}=G(R+\delta)=v_m,\\
			&\abs{Dw}_{w=t}=\abs{Dv}_{v=t}=g(t),\;\;w_m\leq t\leq1.
		\end{aligned}
		$$
	\end{center}
Then $$I_{\beta,p,\delta}(D)\leq\int_{\Omega\setminus D}{\abs{Dw}}^p \;dx+\beta \int_{\partial\Omega}{\abs{w}}^p\;d\mathcal{H}^{n-1}.$$
Let $$E_t=\{x\in \Omega:w(x)>t\}=\{x\in \Omega:d(x)<G^{-1}(t)\}=D+G^{-1}(t)B$$
and 
$$B_t=\{x\in\Omega_*:v(x)>t\}.$$
Being $W_{n-1}(D)=W_{n-1}(D^*)$, using the Steiner formula and (\ref{quer}), we get for $w_m<t\leq 1$ and $\rho=G^{-1}(t)$ that
\begin{center}
	$$\begin{aligned}
		P(E_t)=P(D+\rho B)&=n \sum_{n=0}^{n-1}\binom{n-1}{k}W_{k+1}(D)\rho^k\\
		& \leq \sum_{n=0}^{n-1}\binom{n-1}{k}W_{k+1}(D^*)\rho^k=P(D^*+\rho B)=P(B_t).\\
	\end{aligned}
$$
\end{center}
Hence $$\int_{w=t} \abs{Dw}d\mathcal{H}^{n-1}=g(t)P(E_t)\leq g(t)P(B_t)=\int_{v=t}\abs{Dv}d\mathcal{H}^{n-1}, \qquad w_m<t\leq 1$$
then, by co-area formula 
\begin{center}
	$$\begin{aligned}
		\int_{\Omega\setminus D}{\abs{Dw}}^{p}dx
		&=\int_{w_m}^{1}dt\int_{w=t}{\abs{Dw}}^{p-1}d\mathcal{H}^{n-1}\\
		&=\int_{w_m}^{1}[g(t)]^{p-1}P(E_t)dt\leq\int_{w_m}^{1}[g(t)]^{p-1}P(B_t)dt\\
		&=\int_{\Omega\setminus D_*}{\abs{Dv}}^p dx.
	\end{aligned}
	$$
\end{center}
Since by construction $w=w_m=v_m$ on $\partial \Omega$ and $P(\Omega)\leq P(\Omega_*)$, we have
$$ \int_{\partial\Omega} {\abs{w}}^p d\mathcal{H}^ {n-1}={\abs{w_m}}^{p}P(\Omega)={\abs{v_m}}^pP(\Omega_*)=\int_{\partial \Omega_*}{\abs{v}}^p d\mathcal{H}^{n-1}.$$
So,
\begin{center}
	$$
	\begin{aligned}
		I_{\beta,p, \delta}(D)&\leq\int_{\Omega\setminus D}{\abs{Dw}}^pdx+\beta\int_{\partial \Omega} {\abs{w}}^pd\mathcal{H}^{n-1}\\
		&\leq \int_{\Omega_*\setminus D^*}{\abs{Dv}}^pdx+\beta\int_{\partial \Omega_*}{\abs{v}}^pd\mathcal{H}^{n-1}=I_{\beta,p, \delta}(D^*).
	\end{aligned}
	$$
\end{center} 

\end{proof}

\section{Remarks}
\label{sec6}
There is a counterintuitive behaviour of the functional $I_{\beta,p,\delta}(D,\Omega)$ when $\Omega$ and $D$ are concentric balls, which is very peculiar and that can be summarized in next two propositions.
\begin{prop}
	Let $B_R$ be a ball of radius $R$. If $\beta\geq\biggl[\dfrac{n-1}{R(p-1)}\biggl]^{p-1}$ then $I_{\beta,p, \delta}(B_R)$ is decreasing in $\delta$. When $\beta<\biggl[\dfrac{n-1}{R(p-1)}\biggl]^{p-1}$, then $I_{\beta,p,\delta}(B_R)$ is increasing for $\delta<\biggl(\dfrac{n-1}{p-1}\biggl)\dfrac{1}{{\beta}^{\frac{1}{p-1}}}-R$ and decreasing for $\delta>\biggl(\dfrac{n-1}{p-1}\biggl)\dfrac{1}{{\beta}^{\frac{1}{p-1}}}-R$.
\end{prop}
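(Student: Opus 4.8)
The plan is to use rotational symmetry to reduce everything to a one–dimensional ODE. Since $D=B_R$ and $\Omega=B_R+\delta B=B_{R+\delta}$ are concentric balls, the unique minimizer $u$ of (\ref{funzionale}) given by the Proposition must be radial; it is $p$-harmonic in the annulus $B_{R+\delta}\setminus\overline{B_R}$, equals $1$ on $\overline{B_R}$, satisfies the Robin condition on $\partial B_{R+\delta}$, and is strictly decreasing in $r$. For a radial $p$-harmonic function the flux $\abs{u'(r)}^{p-2}u'(r)\,r^{n-1}$ is constant; writing it as $-c$ with $c>0$ gives $\abs{u'(r)}=c^{1/(p-1)}r^{-\gamma}$, where $\gamma:=\frac{n-1}{p-1}$. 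First I would integrate this from $R$ to $R+\delta$ to get $u_m:=u(R+\delta)=1-c^{1/(p-1)}\int_R^{R+\delta}r^{-\gamma}\,dr$ (the integral being $\log((R+\delta)/R)$ in the borderline case $n=p$), and then insert the boundary condition, which in this radial setting reads $\abs{u'(R+\delta)}^{p-1}=\beta u_m^{p-1}$, i.e. $c=\beta u_m^{p-1}(R+\delta)^{n-1}$, equivalently $c^{1/(p-1)}=\beta^{1/(p-1)}u_m(R+\delta)^{\gamma}$. This turns the expression for $u_m$ into a linear equation for $u_m$, solved by
$$u_m=\left(1+\beta^{1/(p-1)}(R+\delta)^{\gamma}\int_R^{R+\delta}r^{-\gamma}\,dr\right)^{-1}.$$

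Next I would substitute into the representation formula (\ref{8bis}): since $u\equiv u_m$ on $\partial B_{R+\delta}$ and $P(B_{R+\delta})=n\omega_n(R+\delta)^{n-1}$, one has $I_{\beta,p,\delta}(B_R)=\beta u_m^{p-1}\,n\omega_n(R+\delta)^{n-1}$. Using $(R+\delta)^{n-1}=\left((R+\delta)^{\gamma}\right)^{p-1}$ and absorbing it into the denominator, this becomes
$$I_{\beta,p,\delta}(B_R)=n\omega_n\beta\left[(R+\delta)^{-\gamma}+\beta^{1/(p-1)}\int_R^{R+\delta}r^{-\gamma}\,dr\right]^{-(p-1)}.$$
Setting $t=R+\delta$ and $F(t):=t^{-\gamma}+\beta^{1/(p-1)}\int_R^{t}r^{-\gamma}\,dr$, we have $I_{\beta,p,\delta}(B_R)=n\omega_n\beta\,F(t)^{-(p-1)}$, a strictly decreasing function of $F(t)$. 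Since
$$F'(t)=-\gamma\,t^{-\gamma-1}+\beta^{1/(p-1)}t^{-\gamma}=t^{-\gamma-1}\left(\beta^{1/(p-1)}t-\gamma\right)$$
(the formula being valid in the case $n=p$ as well), $F$ is strictly decreasing on $\{t<\gamma\,\beta^{-1/(p-1)}\}$ and strictly increasing on $\{t>\gamma\,\beta^{-1/(p-1)}\}$, with threshold $\gamma\,\beta^{-1/(p-1)}=\frac{n-1}{p-1}\,\beta^{-1/(p-1)}$.

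Finally I would translate the threshold back to the variable $\delta\in(0,\infty)$. The inequality $R\geq\gamma\,\beta^{-1/(p-1)}$ is equivalent to $\beta\geq\left[\frac{n-1}{R(p-1)}\right]^{p-1}$; when it holds, $t=R+\delta>R\geq\gamma\,\beta^{-1/(p-1)}$ for every $\delta>0$, so $F$ is increasing and hence $I_{\beta,p,\delta}(B_R)$ is decreasing in $\delta$. Otherwise $\delta_0:=\frac{n-1}{p-1}\,\beta^{-1/(p-1)}-R$ is strictly positive, and $t<\gamma\,\beta^{-1/(p-1)}$ (hence $F$ decreasing, $I_{\beta,p,\delta}(B_R)$ increasing) exactly for $\delta<\delta_0$, while $t>\gamma\,\beta^{-1/(p-1)}$ (hence $I_{\beta,p,\delta}(B_R)$ decreasing) exactly for $\delta>\delta_0$, which is precisely the statement. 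The only genuine computation here is the radial $p$-harmonic profile together with the algebra producing the closed form for $u_m$; I expect the only real nuisance — rather than a true obstacle — to be keeping the borderline case $n=p$ in mind, but since $F'$ has the same closed form regardless of whether $n=p$, no honest case distinction is needed.
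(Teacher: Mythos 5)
Your proposal is correct and follows essentially the same route as the paper: compute the explicit radial $p$-harmonic profile via the constant flux $r^{n-1}\abs{u'}^{p-2}u'$, impose the Robin condition to get a closed form for the functional, and read off the monotonicity in $\delta$ from the sign of the derivative of the bracketed quantity, with the same threshold $\frac{n-1}{p-1}\beta^{-\frac{1}{p-1}}-R$. The only (harmless) differences are cosmetic: you invoke the representation (\ref{8bis}) instead of re-deriving it by the divergence theorem, and by keeping $\int_R^t r^{-\gamma}dr$ unevaluated you treat $p=n$ and $p\neq n$ uniformly, whereas the paper splits into the two cases.
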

\begin{proof}
	If $D=B_R$ is a ball with radius $R$, then obviously $\Omega=B_R+\delta B=B_{R+\delta}$ and the minimum of (\ref{funzionale}), $u(x)=u(r)$, is 
	\begin{equation}
		\label{sol}
			u(r)=
		\begin{cases}
			1-{\gamma_1}^{\frac{1}{p-1}}\biggl(\dfrac{p-1}{p-n}\biggl)R^{\frac{p-n}{p-1}}\biggl[\biggl(\dfrac{r}{R}\biggl)^{\frac{p-n}{p-1}}-1\biggr] \qquad & p\not =n\\
				1-{\gamma_1}^{\frac{1}{n-1}}\log {\frac{r}{R}} \qquad &p=n
		\end{cases}	
	\end{equation}
for a suitable constant $\gamma_1$. This follows from the fact that 
$$r^{n-1}\Delta_p u= \frac{d}{dr}(r^{n-1}\abs{u'(r)}^{p-2}u'(r))=0,$$
but $u$ is decreasing and positive, then
$$u'(r)=-\dfrac{{\gamma_1}^{\frac{1}{p-1}}}{{r}^{\frac{n-1}{p-1}}}, \qquad r \in [R,R+\delta]$$
If $p \not = n$, then integrating by parts and keeping in mind that $u(R)=1$ it holds

$$u(r)=-{\gamma_1}^{\frac{1}{p-1}}\biggl(\dfrac{p-1}{p-n}\biggl)\biggl[r^{\frac{p-n}{p-1}}-R^{\frac{p-n}{p-1}}\biggr]+1.$$
If $p=n$,
$$u(r)=-{\gamma_1}^{\frac{1}{n-1}}\log \dfrac{r}{R} + 1.$$
Now, we can find $\gamma_1$ using the boundary conditions.\\
If $p\not =n$,then using Robin condition on $\partial\Omega$ it holds that
$$-(-u'(R+\delta))^{p-1}+\beta(u(R+\delta))^{p-1}=0.$$
By using the explicit expression of $u$,
$${\gamma_1}^{\frac{1}{p-1}}(R+\delta)^{-\frac{n-1}{p-1}}=\beta^{\frac{1}{p-1}}\biggl(-{\gamma_1}^{\frac{1}{p-1}}\biggl(\dfrac{p-1}{p-n}\biggl) R^{\frac{p-n}{p-1}}\biggl(1-\biggl(\dfrac{R+\delta}{R}\biggl)^{\frac{p-n}{p-1}}\biggl)+1\biggl)$$
and we have that
$$\gamma_1=\dfrac{\beta}{\biggl[(R+\delta)^{-\frac{n-1}{p-1}}+\biggl(\dfrac{p-1}{p-n}\biggl)\beta^{\frac{1}{p-1}}R^{\frac{p-n}{p-1}}\biggl(\biggl(\dfrac{R+\delta}{R}-1\biggl)^{\frac{p-n}{p-1}}\biggl)\biggl]^{p-1}}.$$
In particular, 
$$\int_{\partial (\Omega\setminus D)}  ({\abs{Du}}^{p-2}Du \cdot \nu)\;d\mathcal{H}^{n-1}=0$$
and using the divergence theorem
$$-\int_{\Omega\setminus D}{\abs{Du}}^{p}+\int_{\partial (\Omega \setminus D)}u({\abs{Du}}^{p-2}Du \cdot \nu)=0$$
$$-\int_{\Omega\setminus D}{\abs{Du}}^{p}-\int_{\partial \Omega} \beta u^p+\int_{\partial D} u({\abs{Du}}^{p-2}Du \cdot \nu)=0.$$
This implies that
$$I_{\beta,p,\delta}(B_R)=\int_{\partial B_R} {\abs{Du}}^{p-2}\dfrac{\partial u}{\partial \nu}\;d\mathcal{H}^{n-1}.$$
Let us observe that 
$$I_{\beta,p,\delta}(B_R)=n\omega_n\gamma_1$$
and we have 
 $$\partial_\delta[I_{\beta,p,\delta}(B_R)]<0$$
 if
 $$\partial_\delta \biggl[(R+\delta)^{\frac{1-n}{p-1}}+\beta^{\frac{1}{p-1}}\biggl(\dfrac{p-1}{p-n}\biggl)R^{\frac{p-n}{p-1}}\biggl(\biggl(\dfrac{R+\delta}{R}\biggl)^{\frac{p-n}{p-1}}-1\biggl)\biggl]>0$$
and this is true if
$$\delta>\biggl(\dfrac{n-1}{p-1}\biggl)\dfrac{1}{{\beta}^{\frac{1}{p-1}}}-R.$$
On the other hand, when $p=n$ we have
$$\gamma_1=\dfrac{\beta}{\biggl[\frac{1}{(R+\delta)}+{\beta}^{\frac{1}{n-1}}\log\frac{R+\delta}{R}\biggl]^{n-1}}$$
and 
$$I_{\beta,n,\delta}(B_R)=\dfrac{n\omega_n\beta}{{\biggl[\frac{1}{(R+\delta)}+{\beta}^{\frac{1}{n-1}}\log\frac{R+\delta}{R}\biggl]^{n-1}}}.$$
So
$$\partial_\delta [I_{\beta,p,\delta}(B_R)]<0$$
if
$$\partial_\delta\biggl[\dfrac{1}{(R+\delta)}+\beta^{\frac{1}{n-1}}\log\biggl(1+\frac{\delta}{R}\biggl)\biggl]>0$$
and this is true if and only if
$$\delta > \dfrac{1}{\beta^{\frac{1}{n-1}}}-R,$$
and the proposition is proved.
\end{proof}
To show next result, we first need the following Lemma, using the following notation.\\ Let $D=B_R \subset \Omega$, and denote by 
$$
\begin{aligned}
	P&=P(B_R)=n\omega_nR^{n-1},\;V=\abs{B_R}\\
	&=\omega_nR^n,\;\Delta P=P(\Omega)-P(B_R),\;\Delta V=\abs{\Omega}-\abs{B_R}.
\end{aligned}
$$
\begin{lemma}
	\label{cost}
	Let $D=B_R$ and $B_R \subset \Omega$. For any $\delta_0>0$, there exists a constant $$C=\dfrac{n\omega_n R^{n-1}}{\delta_0}\biggl[\biggl(1+\dfrac{\delta_0}{\omega_n R^n}\biggl)^{1-\frac{1}{n}}-1\biggl]$$
	such that if $\Delta V\leq \delta_0$ it holds that $$\Delta P \geq C\Delta V.$$
\end{lemma}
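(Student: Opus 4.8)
The statement compares the perimeter surplus $\Delta P = P(\Omega) - P(B_R)$ against the volume surplus $\Delta V = \abs{\Omega} - \abs{B_R}$ for domains $\Omega$ containing the fixed ball $B_R$. The natural tool is the isoperimetric inequality applied to $\Omega$, combined with the elementary observation that $x \mapsto x^{1-1/n}$ is concave. The plan is to write $P(\Omega) \geq n\omega_n^{1/n} \abs{\Omega}^{1-1/n} = n\omega_n^{1/n}(V + \Delta V)^{1-1/n}$, and then bound the right-hand side from below by a linear function of $\Delta V$ on the interval $[0,\delta_0]$.

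First I would isolate the scalar function $f(t) = (V+t)^{1-1/n}$ for $t \in [0,\delta_0]$. Since $1 - 1/n \in (0,1)$, $f$ is concave, so its graph lies above the chord joining $(0, f(0))$ and $(\delta_0, f(\delta_0))$; that is, for all $t \in [0,\delta_0]$,
$$f(t) \geq f(0) + \frac{f(\delta_0) - f(0)}{\delta_0}\, t = V^{1-1/n} + \frac{(V+\delta_0)^{1-1/n} - V^{1-1/n}}{\delta_0}\, t.$$
Multiplying by $n\omega_n^{1/n}$ and using $P(B_R) = n\omega_n^{1/n} V^{1-1/n}$ (which follows from $P = n\omega_n R^{n-1}$, $V = \omega_n R^n$), the isoperimetric inequality for $\Omega$ gives
$$P(\Omega) \geq P(B_R) + \frac{n\omega_n^{1/n}}{\delta_0}\Bigl[(V+\delta_0)^{1-1/n} - V^{1-1/n}\Bigr]\Delta V,$$
which is exactly $\Delta P \geq C \Delta V$ once we substitute $V = \omega_n R^n$ and simplify: $n\omega_n^{1/n} V^{1-1/n} = n\omega_n R^{n-1}$, and $n\omega_n^{1/n}\bigl[(V+\delta_0)^{1-1/n} - V^{1-1/n}\bigr]/\delta_0 = \frac{n\omega_n R^{n-1}}{\delta_0}\bigl[(1 + \delta_0/(\omega_n R^n))^{1-1/n} - 1\bigr]$, matching the stated constant $C$.

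There is essentially no serious obstacle here: the two ingredients are the classical isoperimetric inequality (already recalled in the excerpt as the case $i=0$, $j=1$ of Alexandrov--Fenchel) and concavity of a power with exponent in $(0,1)$. The only point requiring a line of care is the algebraic verification that the slope of the chord simplifies to the displayed $C$, and noting that $C > 0$ (again by strict concavity, since $\delta_0 > 0$), so the inequality is nontrivial. One should also remark that the hypothesis $\Delta V \leq \delta_0$ is used precisely to guarantee $\Delta V$ lies in the interval $[0,\delta_0]$ where the chord bound applies; for $\Delta V$ beyond $\delta_0$ the linear lower bound would overestimate $f$.
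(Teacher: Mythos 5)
Your proof is correct. The paper itself does not prove this lemma --- it defers to the reference \cite{prof} --- but your argument (isoperimetric inequality $P(\Omega)\geq n\omega_n^{1/n}\abs{\Omega}^{1-\frac{1}{n}}$ followed by the chord bound for the concave function $t\mapsto(\omega_n R^n+t)^{1-\frac{1}{n}}$ on $[0,\delta_0]$) is exactly the standard derivation of this estimate, the algebra identifying the chord slope with the stated constant $C$ checks out, and you correctly flag where the hypothesis $\Delta V\leq\delta_0$ enters.
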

We refer to \cite{prof} for the proof. \\ Now, we want to prove that, in the regime $\beta$ "small", if the thickness $\delta$ is below a certain threshold value, $I_{\beta,p}(B_R,\Omega)$ is greater then $I_{\beta,p}(B_R,B_R)$.
\begin{prop}
	Let $D=B_R(0)$ and $\beta<\biggl[\dfrac{n-1}{R(p-1)}\biggl]^{p-1}$. Then there exists a positive constant $\delta_0$ such that for any bounded domain $\Omega$, with $D\subset \Omega$ and $\abs{\Omega}-\abs{D}<\delta_0$, then $$I_{\beta,p}(B_R,\Omega)>I_{\beta,p}(B_R,B_R)$$
\end{prop}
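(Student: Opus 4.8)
The plan is to bound $I_{\beta,p}(B_R,\Omega)$ from below by a one–dimensional (``web–function'') quantity via the co-area formula, and to notice that the hypothesis on $\beta$ is precisely the condition making the isoperimetric growth of the perimeter outweigh the loss caused by the decay of the optimal temperature outside $B_R$.

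First, $I_{\beta,p}(B_R,B_R)=\beta P(B_R)=\beta n\omega_nR^{n-1}$, since when $\Omega=D=B_R$ every competitor satisfies $\phi\ge1$ on all of $\overline{B_R}=\overline{\Omega}$, so the infimum is attained at $\phi\equiv1$. Let now $u$ be the minimizer of $I_{\beta,p}(B_R,\Omega)$; recall $0<u\le1$ and $u\equiv1$ on $\overline{B_R}$. For $t\in(0,1)$ put $\mu(t)=|\{u>t\}|$, $\ell(t)=\mathcal{H}^{n-1}(\{u=t\}\cap\Omega)$ and $\sigma(t)=\mathcal{H}^{n-1}(\{u^{*}>t\}\cap\partial^{*}\Omega)$, with $u^{*}$ the trace of $u$. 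Then: (i) by the co-area formula and Hölder's inequality on each level set, $\int_\Omega|Du|^p\,dx\ge\int_0^1\ell(t)^p\,(-\mu'(t))^{1-p}\,dt$; (ii) by the layer-cake formula, $\beta\int_{\partial^{*}\Omega}u^p\,d\mathcal{H}^{n-1}=\beta p\int_0^1 t^{p-1}\sigma(t)\,dt$; (iii) since $\overline{B_R}\subset\Omega$ lies in the interior, $\{u>t\}$ has finite perimeter $P(\{u>t\})=\ell(t)+\sigma(t)$ and volume $\mu(t)\ge|B_R|$, whence by the isoperimetric inequality $\ell(t)+\sigma(t)\ge P(B_{\rho(t)})$ with $\omega_n\rho(t)^n=\mu(t)$ and $\rho(t)\ge R$. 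Consequently $I_{\beta,p}(B_R,\Omega)\ge\int_0^1\bigl[\ell(t)^p(-\mu'(t))^{1-p}+\beta p t^{p-1}\sigma(t)\bigr]\,dt$.

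The core step is the pointwise inequality
$$\frac{\ell(t)^p}{(-\mu'(t))^{p-1}}+\beta p t^{p-1}\sigma(t)\ \ge\ \beta p t^{p-1}P(B_{\rho(t)})-(p-1)\beta^{\frac{p}{p-1}}t^{p}\,(-\mu'(t)),$$
obtained by writing $\sigma(t)\ge P(B_{\rho(t)})-\ell(t)$ and then minimising $s\mapsto s^p(-\mu'(t))^{1-p}-p\beta t^{p-1}s$ over $s\ge0$, whose minimum equals $-(p-1)\beta^{p/(p-1)}t^p(-\mu'(t))$ (the degenerate values $-\mu'(t)\in\{0,\infty\}$ are checked directly). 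Integrating over $(0,1)$, using $\int_0^1pt^{p-1}\,dt=1$, the bound $\int_0^1 t^p(-\mu'(t))\,dt\le\int_0^1 pt^{p-1}(\mu(t)-|B_R|)\,dt=:J$ (integration by parts, using $\mu(1^-)\ge|B_R|$), and Lemma \ref{cost} applied to $B_R\subset B_{\rho(t)}$ with the admissible choice $\delta_0=|\Omega|-|B_R|$ (so that $P(B_{\rho(t)})\ge P(B_R)+C\,(\mu(t)-|B_R|)$), we arrive at
$$I_{\beta,p}(B_R,\Omega)\ \ge\ \beta P(B_R)+J\,\beta\Bigl(C-(p-1)\beta^{\frac{1}{p-1}}\Bigr),\qquad C=\frac{n\omega_nR^{n-1}}{\delta_0}\Bigl[\bigl(1+\tfrac{\delta_0}{\omega_nR^n}\bigr)^{1-\frac1n}-1\Bigr].$$

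Finally, $J\ge0$, and $J=0$ would force $\mu\equiv|B_R|$ on $(0,1)$, i.e. $u=\chi_{B_R}$ a.e., which is impossible for $u\in W^{1,p}(\Omega)$: indeed $u$ is continuous and $\equiv1$ on $\overline{B_R}$, so $\mu(t)>|B_R|$ for $t$ close to $1$; hence $J>0$. The hypothesis $\beta<\bigl[\tfrac{n-1}{R(p-1)}\bigr]^{p-1}$ is equivalent to $(p-1)\beta^{1/(p-1)}<\tfrac{n-1}{R}$, while $C=C(\delta_0)\uparrow\tfrac{n-1}{R}$ as $\delta_0\downarrow0$; thus there is $\delta_0>0$ such that $|\Omega|-|B_R|<\delta_0$ yields $C>(p-1)\beta^{1/(p-1)}$, and with such $\delta_0$ the last display gives $I_{\beta,p}(B_R,\Omega)>\beta P(B_R)=I_{\beta,p}(B_R,B_R)$. (If $\Omega$ is too irregular for a minimizer to exist, one runs the same estimate along a minimizing sequence and closes the argument by weak $W^{1,p}$–compactness, the same bound forcing a limit $\chi_{B_R}\notin W^{1,p}$.) The main obstacle is precisely the pointwise trade-off inequality above and the bookkeeping that turns the competition between the ``perimeter gain'' $CJ$ and the ``decay loss'' $(p-1)\beta^{1/(p-1)}J$ into the stated threshold on $\beta$; the remaining steps are routine.
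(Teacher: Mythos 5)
Your argument is correct and lands on exactly the right threshold, but it is implemented differently from the paper's proof, so a comparison is worth recording. The paper works with $p(t)=P(\{u>t\}\cap\Sigma)$ and computes $\int_0^1 t^{p-1}p(t)\,dt$ \emph{exactly} via coarea and Fubini, splitting it into the inner-sphere term $\mathcal{H}^{n-1}(\Gamma_1)/p$, the boundary-trace term $\frac1p\int_{\Gamma_0}u^p\,d\mathcal{H}^{n-1}$ and the bulk term $\int_\Omega u^{p-1}\abs{Du}\,dx$; it then bounds the same quantity from below by $\frac2p\mathcal{H}^{n-1}(\Gamma_1)+\frac Cp\int u^p$ using Lemma \ref{cost} applied directly to the (generally non-spherical) superlevel sets, and finally splits $\int u^{p-1}\abs{Du}$ by Young's inequality with $\epsilon=1/\beta$. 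You instead produce a chain of lower bounds for $I_{\beta,p}(B_R,\Omega)$ itself: Hölder on level sets gives the web-function estimate $\int\abs{Du}^p\ge\int_0^1\ell(t)^p(-\mu'(t))^{1-p}dt$, the isoperimetric inequality replaces $\{u>t\}$ by the ball $B_{\rho(t)}$ before Lemma \ref{cost} is invoked, and your pointwise minimization in $\ell$ is precisely the Young step in disguise (same exponents, same constant $(p-1)\beta^{p/(p-1)}$), with an integration by parts of $\int t^p(-\mu')$ replacing the paper's Fubini identity. The net inequality $I\ge\beta P(B_R)+\beta J\,(C-(p-1)\beta^{1/(p-1)})$ and the observation $C(\delta_0)\uparrow\frac{n-1}{R}$ as $\delta_0\downarrow 0$ reproduce the paper's conclusion; a small bonus of your version is that you justify strictness explicitly ($J>0$ because $J=0$ would force $u=\chi_{B_R}\notin W^{1,p}(\Omega)$), a point the paper passes over, and your bookkeeping with $C(\delta_0)$ decreasing in $\delta_0$ makes the quantifier structure of the statement transparent. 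The usual measure-theoretic caveats (decomposition of $P(\{u>t\})$ into $\ell(t)+\sigma(t)$ for a.e.\ $t$, behaviour on $\{\abs{Du}=0\}$, regularity of $\partial\Omega$ needed for the trace) are glossed at the same level in both proofs, so I do not count them against you.
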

\begin{proof}
	Let $u$ be the minimizer of $I_{\beta,p}(B_R,\Omega)$. Consider \\
	$$\Sigma=\Omega \setminus B_R, \qquad\Gamma_m=\partial \Omega \setminus \partial B_R, \qquad \Gamma_t=\partial \{u>t\}\setminus\partial B_R, \qquad \Gamma_1=\partial B_R \cap \Omega$$\\
	and
	$$p(t)=P(\{u>t\}\cap \Sigma), \qquad \textrm{for a.e.}\;t>0.$$
	We want to show that 
	$$I_{\beta,p}(B_R;B_R)=\beta P(B_R)< I_{\beta,p}(B_R;\Omega)=\int_\Omega {\abs{Du}}^p dx+\beta \int_{\partial\Omega} {\abs{u}}^p d\mathcal{H}^{n-1}$$
	or equivalently $$\mathcal{H}^{n-1}(\Gamma_1)<\frac{1}{\beta}\int_{\Omega}{\abs{Du}}^p dx+\int_{\Gamma_0} {\abs{u}}^p d\mathcal{H}^{n-1}$$
then using coarea formula and Fubini theorem we have 

\begin{equation}
	\label{eq1}
		\begin{aligned}
		\int_{0}^{1} t^{p-1}p(t)dt&=\int_{0}^{1}       t^{p-1}P(\{u>t\}\cap \Sigma)dt\\\\
		&= \int_{0}^{1}\biggl(\int_{\Gamma_1} t^{p-1} d\mathcal{H}^{n-1}\biggl)dt+\int_{0}^{1}\biggl(\int_{\Gamma_t \cap \Omega} t^{p-1} d\mathcal{H}^{n-1}\biggl)dt+ \int_{0}^{1}\biggl(\int_{\Gamma_t \cap \partial \Omega} t^{p-1} d\mathcal{H}^{n-1}\biggl)dt\\\\
	&=\dfrac{\mathcal{H}^{n-1}(\Gamma_1)}{p}+ \dfrac{1}{p} \int_{\Gamma_0}u^p d\mathcal{H}^{n-1}+ \int_{\Omega} u^{p-1}\abs{Du}dx.
	\end{aligned}
\end{equation}

	 From the Lemma \ref{cost} we know that for $\abs{\Sigma}<\delta_0$, with $\delta_0$ fixed,
     $$p(t)-2\mathcal{H}^{n-1}(\Gamma_1)\geq C \mu (t)$$
	 and then
	 $$ \int_{0}^{1}t^{p-1}p(t)dt \geq 2 \int_{0}^{1}t^{p-1}\mathcal{H}^{n-1}(\Gamma_1)dt+ C \int_{0}^{1}t^{p-1}\mu(t)dt=\dfrac{2}{p}\mathcal{H}^{n-1}(\Gamma_1)+\dfrac{C}{p}\int_{\Omega} u^p dx,$$
	 where $C$ is the constant of the Lemma \ref{cost}. 
	 Hence, substituing in (\ref{eq1}) 
	 $$\dfrac{2}{p}\mathcal{H}^{n-1}(\Gamma_1)+\dfrac{C}{p}\int_{\Omega}u^p dx \leq \dfrac{\mathcal{H}^{n-1}(\Gamma_1)}{p}+ \dfrac{1}{p}\int_{\Gamma_0}u^p d\mathcal{H}^{n-1}+\int_{\Omega}u^{p-1}\abs{Du}dx.$$
	 On the other hand, by the Young inequality
	 $$\int_{\Omega} u^{p-1}\abs{Du}dx \leq \dfrac{p-1}{p\epsilon^{\frac{1}{p-1}}}\int_{\Omega}u^pdx+\dfrac{\epsilon}{p}\int_{\Omega} {\abs{Du}}^pdx$$
	 $$\mathcal{H}^{n-1}(\Gamma_1)+ C\int_{\Omega}{\abs{u}}^pdx \leq \int_{\Gamma_0}u^p d\mathcal{H}^{n-1}+\dfrac{p-1}{\epsilon^{\frac{1}{p-1}}}\int_{\Omega} u^pdx+ \epsilon \int_{\Omega}{\abs{Du}}^pdx=$$
	 $$=\int_{\Gamma_0} u^p d\mathcal{H}^{n-1}+\dfrac{p}{\epsilon^{\frac{1}{p-1}}}\int_{\Omega}u^pdx-\dfrac{1}{\epsilon^{\frac{1}{p-1}}}\int_{\Omega} u^pdx+\epsilon \int_{\Omega} {\abs{Du}}^pdx$$
	 and choosing $\epsilon=\dfrac{1}{\beta}$ it holds that
	 $$\mathcal{H}^{n-1}(\Gamma_1)+[C-\beta^{\frac{1}{p-1}}(p-1)]\int_{\Omega}u^pdx \leq \int_{\Gamma_0} u^p d\mathcal{H}^{n-1}+\dfrac{1}{\beta}\int_{\Omega} {\abs{Du}}^pdx.$$
	 Then, being $R<\dfrac{n-1}{(p-1)\beta^{\frac{1}{p-1}}}$, for $\delta_0$ sufficiently small the constant $C$ is larger then $\beta^{\frac{1}{p-1}}(p-1)$ and thesis follows.
\end{proof}

\section*{Acknowledgments}
This work has been partially supported by GNAMPA of INdAM.
\addcontentsline{toc}{chapter}{ Bibliografia}

\bibliographystyle{Abbrv}
\bibliography{bibliografia}

\begin{thebibliography}{10}

\bibitem{butt}
E.~Acerbi and G.~Buttazzo.
\newblock Reinforcement problems in the calculus of variations.
\newblock {\em Ann. Inst. H. Poincar\'{e} Anal. Non Lin\'{e}aire},
  3(4):273--284, 1986.

\bibitem{caff}
H.~Br\'{e}zis, L.~A. Caffarelli, and A.~Friedman.
\newblock Reinforcement problems for elliptic equations and variational
  inequalities.
\newblock {\em Ann. Mat. Pura Appl. (4)}, 123:219--246, 1980.

\bibitem{bucur}
D.~Bucur and G.~Buttazzo.
\newblock {\em Variational methods in shape optimization problems}, volume~65
  of {\em Progress in Nonlinear Differential Equations and their Applications}.
\newblock Birkh\"{a}user Boston, Inc., Boston, MA, 2005.

\bibitem{simm}
D.~Bucur, G.~Buttazzo, and C.~Nitsch.
\newblock Symmetry breaking for a problem in optimal insulation.
\newblock {\em J. Math. Pures Appl. (9)}, 107(4):451--463, 2017.

\bibitem{nitsch}
D.~Bucur, G.~Buttazzo, and C.~Nitsch.
\newblock Two optimization problems in thermal insulation.
\newblock {\em Notices Amer. Math. Soc.}, 64(8):830--835, 2017.

\bibitem{nahon}
D.~Bucur, M.~Nahon, C.~Nitsch, and C.~Trombetti.
\newblock Shape optimization of a thermal insulation problem, 2021.

\bibitem{burago}
Y.~D. Burago and V.~A. Zalgaller.
\newblock {\em Geometric inequalities}, volume 285 of {\em Grundlehren der
  mathematischen Wissenschaften [Fundamental Principles of Mathematical
  Sciences]}.
\newblock Springer-Verlag, Berlin, 1988.
\newblock Translated from the Russian by A. B. Sosinski\u{\i}, Springer Series
  in Soviet Mathematics.

\bibitem{prof}
F.~Della~Pietra, C.~Nitsch, and C.~Trombetti.
\newblock An optimal insulation problem.
\newblock {\em Math. Ann.}, 382(1-2):745--759, 2022.

\bibitem{window}
J.~Denzler.
\newblock Windows of given area with minimal heat diffusion.
\newblock {\em Trans. Amer. Math. Soc.}, 351(2):569--580, 1999.

\bibitem{esposito}
P.~Esposito and G.~Riey.
\newblock Asymptotic behaviour of a thin insulation problem.
\newblock {\em J. Convex Anal.}, 10(2):379--388, 2003.

\bibitem{frie}
A.~Friedman.
\newblock Reinforcement of the principal eigenvalue of an elliptic operator.
\newblock {\em Arch. Rational Mech. Anal.}, 73(1):1--17, 1980.

\bibitem{convex}
R.~Schneider.
\newblock {\em Convex bodies: the {B}runn-{M}inkowski theory}, volume 151 of
  {\em Encyclopedia of Mathematics and its Applications}.
\newblock Cambridge University Press, Cambridge, expanded edition, 2014.

\end{thebibliography}

\end{document}